\newtheorem{thm}{Theorem}[section]
\newtheorem{rmk}[thm]{Remark}
\newtheorem{thm-con}[thm]{Theorem-Conjecture}
\numberwithin{equation}{section}
\theoremstyle{definition}
\newcommand{\f}{\Bbb F}
\begin{document}

\title[DO polynomials and reversed Dickson polynomials]{Dembowski-Ostram polynomials and reversed Dickson polynomials}

\author[Neranga Fernando]{Neranga Fernando}
\address{Department of Mathematics,
Northeastern University, Boston, MA 02115, USA}
\email{w.fernando@northeastern.edu}

\begin{abstract}
We give a complete classification of Dembowski-Ostram polynomials from reversed Dickson polynomials in odd characteristic. 
\end{abstract}

\keywords{Finite field, reversed Dickson polynomial, Dembowski-Ostram polynomial}

\subjclass[2010]{11T55, 05A10, 11T06}

\maketitle


\section{Introduction}

Let $p$ be a prime, $e$ a positive integer, and $q=p^e$. Let $\Bbb F_{q}$ be the finite field with $q$ elements. Dembowski-Ostram (DO) polynomials over a finite field $\f_q$ are those of the form $\sum_{i,j}\,a_{ij}x^{p^i+p^j}$, where $a_{ij}\in \f_q$.  Dembowski-Ostram polynomials are used for a cryptographic application in the public key cryptosystem HFE (\cite{Patarin-1996}). This class of polynomials were introduced by Dembowski and Ostram in \cite{DO-1968} for constructions of planar functions in odd characteristic. A polynomial $g\in \f_q[x]$ is called a {\it planar polynomial} if $g(x+a)-g(x)$ is a permutation polynomial for every $a\in \f_q^*$. A polynomial $f \in \Bbb F_{q}[x]$ is called a \textit{permutation polynomial} (PP) over $\Bbb F_{q}$ if the associated mapping $x\mapsto f(x)$ is a bijection from $\f_{q}$ to $\f_{q}$. Clearly, a polynomial cannot be planar in even characteristic. 

In the study of permutation polynomials over finite fields, Dickson polynomials have played a pivotal role. 

The $n$-th Dickson polynomial of the first kind $D_n(x,a)$ is defined by
\[
D_{n}(x,a) = \sum_{i=0}^{\lfloor\frac n2\rfloor}\frac{n}{n-i}\dbinom{n-i}{i}(-a)^{i}x^{n-2i},
\]
where $a\in \f_q$ is a parameter. 

The $n$-th Dickson polynomial of the second kind $E_n(x,a)$ is defined by
\[
E_{n}(x,a) = \sum_{i=0}^{\lfloor\frac n2\rfloor}\dbinom{n-i}{i}(-a)^{i}x^{n-2i},
\]
where $a\in \f_q$ is a parameter. 

DO polynomials from Dickson polynomials of the first kind and second kind were completely classified by Coulter and Matthews in \cite{Coulter-Matthews-2010}. 

The concept of the reversed Dickson polynomial $D_{n}(a,x)$ was first introduced by Hou, Mullen, Sellers and Yucas in \cite { Hou-Mullen-Sellers-Yucas-FFA-2009} by reversing the roles of the variable and the parameter in the Dickson polynomial $D_{n}(x,a)$. 

The $n$-th reversed Dickson polynomial of the first kind $D_n(a,x)$ is defined by

\begin{equation}\label{E1.2}
D_{n}(a,x) = \sum_{i=0}^{\lfloor\frac n2\rfloor}\frac{n}{n-i}\dbinom{n-i}{i}(-x)^{i}a^{n-2i},
\end{equation}

where $a\in \f_q$ is a parameter. 

By reversing the roles of the variable and the parameter in the Dickson polynomial of the second kind $E_{n}(x,a)$, the $n$-th reversed Dickson polynomial of the second kind $E_n(a,x)$ can be defined by

\begin{equation}\label{E1.4}
E_{n}(a,x) = \sum_{i=0}^{\lfloor\frac n2\rfloor}\dbinom{n-i}{i}(-x)^{i}a^{n-2i},
\end{equation}

where $a\in \f_q$ is a parameter.

In a recent paper, X. Zhang, B. Wu and Z. Liu studied DO polynomials from reversed Dickson polynomials in characteristic 2; see \cite{Zhang-Wu-Liu-2016}. Motivated by the work of Coulter and Matthews in \cite{Coulter-Matthews-2010} and the work of X. Zhang, B. Wu and Z. Liu, we study and completely classify DO polynomials from reversed Dickson polynomials in odd characteristic. 

It is easy to see from the definitions of reversed Dickson polynomials of the first and second kinds that 
$$D_{n}(a,x)=a^n\,D_{n}(1,\frac{x}{a^2})$$

and 

$$E_{n}(a,x)=a^n\,E_{n}(1,\frac{x}{a^2}),$$

which implies that 

\begin{center}
$D_{n}(a,x)$ is DO if and only if $D_{n}(1,x)$ is DO
\end{center}

and 

\begin{center}
$E_{n}(a,x)$ is DO if and only if $E_{n}(1,x)$ is DO,
\end{center}

respectively. Since $D_{n}(1,0)=1$, $E_{n}(1,0)=1$ and DO polynomials do not contain any constant terms, we study DO polynomials arising from $D_n(1,x)- D_n(1,0)$ and $E_n(1,x)- E_n(1,0)$. The paper is organized as follows. 

We present a complete classification of DO polynomials from reversed Dickson polynomials of the first kind and second kind over in odd characteristic in Section 2 and Section 3, respectively. We also present the monomials, binomials, trinomials, and quadrinomials when reversed Dickson polynomials are DO polynomials.

Throughout the paper, we always assume that $p$ is odd.

\vskip 0.1in

\noindent \textbf{Acknowledgements}

\vskip 0.1in

The author would like to thank Ariane Masuda for the invaluable discussions while the manuscript was in preparation. She also contributed to finding patterns of parameters $n$ and $d$, and the polynomials listed in Remark~\ref{R1} and Remark~\ref{R2}. 


\section{DO polynomials from reversed Dickson polynomials of the first kind}

We first consider reversed Dickson polynomials of the first kind. Recall that 
\begin{center}
$D_{n}(a,x)$ is DO if and only if $D_{n}(1,x)$ is DO.
\end{center}

Let $d$ be a positive integer. We denote $D_n(1,x^d)- D_n(1,0)$ by $\mathcal{D}_n$. Then

\begin{equation*}
\mathcal{D}_n= \sum_{i=1}^{\lfloor\frac n2\rfloor}\frac{n}{n-i}\dbinom{n-i}{i}(-x^d)^{i}.
\end{equation*}

Since $\mathcal{D}_{np}=\mathcal{D}_n^p$ and $\mathcal{D}_n(x^{pd})=\mathcal{D}_n(x^d)^p$, we always assume that $\textnormal{gcd}(n,p)=1$ and $\textnormal{gcd}(d,p)=1$. 

\begin{thm}
Let $q$ be a power of a prime $p$. The polynomial $\mathcal{D}_n$ is a Dembowski-Ostrom polynomial over $\mathbb F_q$ if and only if one of the following holds.
\begin{enumerate}[{\normalfont (i)}]
\item $p=3$, $d=(p^j+1)p^\ell$, $n=2p^m$, where $j, m, \ell\geq 0$.
\item $p=3$, $d=2p^\ell$, $n=4p^{m}$, where $m,\ell\geq 0$.
\item $p=3$, $d=2p^\ell$, $n=5p^{m}$, where $m,\ell\geq 0$.
\item $p=3$, $d=2p^\ell$, $n=7p^{m}$, where $m,\ell\geq 0$.
\item $p>3$, $d=(p^i+1)p^{\ell}$, $n=2p^m$, where $\ell, m\geq 0$. 
\item $p>3$, $d=(p^i+1)p^{\ell}$, $n=3p^m$, where $\ell, m\geq 0$. 
\end{enumerate}
\end{thm}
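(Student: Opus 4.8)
The plan is to analyze when the coefficients of $\mathcal{D}_n$ force the polynomial into the DO shape $\sum a_{ij}x^{p^i+p^j}$, i.e.\ every exponent $di$ that actually appears (with nonzero coefficient) must be of the form $p^s+p^t$ for some $s,t\ge 0$. Write $c_i=\frac{n}{n-i}\binom{n-i}{i}(-1)^i$ for $1\le i\le\lfloor n/2\rfloor$, so $\mathcal{D}_n=\sum_i c_i x^{di}$, and note $c_1=-n\neq 0$ since $\gcd(n,p)=1$. Thus $d=d\cdot 1$ must itself be a sum of two $p$-powers, which already restricts $d$ to the form $(p^a+1)p^\ell$ (including the degenerate $2p^\ell$ when $a=0$); this is the source of the two parametric families for $d$ appearing in the statement. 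The real work is then to pin down $n$.

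First I would reduce to the base cases by the stated reductions $\gcd(n,p)=\gcd(d,p)=1$, and further observe (using $\mathcal{D}_n(x^d)$ vs.\ scaling $x\mapsto x^{p^\ell}$) that it suffices to treat $\ell=0$, so $d=p^a+1$ or $d=2$. For a fixed such $d$, I would examine the top of the polynomial: the leading exponent is $d\lfloor n/2\rfloor$ with coefficient $c_{\lfloor n/2\rfloor}$, and I must understand when $c_{\lfloor n/2\rfloor}\neq 0$ in characteristic $p$ — this is a Lucas'-theorem/Kummer's-theorem computation on $\frac{n}{n-i}\binom{n-i}{i}$. The key structural constraint is: since a DO polynomial has degree at most $2p^{k}$ for the relevant $k$, and the exponents $p^s+p^t$ are very sparse, once $n$ is moderately large the intermediate exponents $di$ (for $i$ in a "middle range" where $c_i\neq 0$) cannot all land on the sparse set $\{p^s+p^t\}$. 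So the heart of the argument is a two-part claim: (a) for each candidate pair $(p,n)$ in (i)--(vi), verify directly that all nonzero $c_i$ sit at exponents of the form $p^s+p^t$ (a finite check once $d$ is normalized, using the explicit small-$n$ Dickson coefficients together with the Frobenius-scaling in $\ell, m$); and (b) for every other $n$ coprime to $p$, exhibit some $i$ with $c_i\neq 0$ and $di\notin\{p^s+p^t\}$.

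For part (b), the main obstacle — and where I would spend the most care — is controlling the nonvanishing of $c_i=\frac{n}{n-i}\binom{n-i}{i}(-1)^i$ modulo $p$ simultaneously with the arithmetic condition "$di$ is not a sum of two $p$-powers." I would handle the divisibility by writing $n$ in base $p$ and using Kummer's theorem to locate an $i$ (for instance near $i=\lfloor n/3\rfloor$ or near a digit boundary of $n$) for which no carries occur, hence $c_i\not\equiv 0$; then separately argue $di$ is too "dense" in base $p$ to be $p^s+p^t$ (e.g.\ it has at least three nonzero base-$p$ digits, or two nonzero digits that are not both $1$, unless $p=3$ where $1+1=2$ creates the extra sporadic families (ii)--(iv) and forces a slightly finer analysis). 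The split between $p=3$ and $p>3$ in the statement is exactly the footprint of this digit analysis: when $p=3$ a "digit $2$" is $1+1$, enlarging the set of admissible exponents and letting $n=4,5,7$ (times powers of $3$) sneak in, whereas for $p>3$ only $n=2p^m,3p^m$ survive. I expect the $p>3$ direction to follow from a clean general lemma, while the $p=3$ case will require checking the handful of exceptional $n$ by hand and then proving no further ones occur.
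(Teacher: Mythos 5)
Your overall skeleton matches the paper's: the first term forces $d=(p^a+1)p^\ell$, the listed cases are a finite verification, and the converse requires exhibiting, for every other $n$ coprime to $p$, an index $i$ with $c_i\not\equiv 0\pmod p$ and $di$ not a sum of two $p$-powers. But the proposal stops exactly where the difficulty begins: part (b) is the entire content of the theorem, and "locate an $i$ near $\lfloor n/3\rfloor$ or a digit boundary using Kummer's theorem" is not an argument. Two concrete problems. First, the coefficients are $c_i=\pm\frac{n}{n-i}\binom{n-i}{i}$, not binomial coefficients of $n$, so Kummer's theorem does not directly read off their $p$-adic valuation from the base-$p$ digits of $n$; you would need to control carries in the addition $i+(n-2i)=n-i$ while simultaneously controlling the digits of $di$, and you have not shown these two constraints can be met at once. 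Second, no single choice of $i$ works uniformly: the paper's proof is long precisely because the natural candidates keep degenerating. For $p=3$, $d=2$, it examines the second term, then (when that vanishes) the fourth term $x^8$, then (when \emph{that} vanishes) the seventh term $x^{14}$ or the penultimate term, each time proving nonvanishing by a separate congruence argument modulo $3$ or $9$ that exploits the vanishing of the earlier terms. Your sketch gives no mechanism guaranteeing that the fallback chain terminates, i.e.\ that one cannot have all the "bad-exponent" coefficients vanish simultaneously for some large $n$.

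A smaller but real issue: you assert that for $p>3$ only $n=2p^m,3p^m$ survive "from a clean general lemma," but the paper's argument there is not a digit-density lemma either; it hinges on the dichotomy $n\equiv 3\pmod p$ (second coefficient vanishes) versus not, and in the latter case derives $p=3$ from the Diophantine equation $2(p^i+1)=p^\alpha+p^\beta$, then in the former case analyzes the last term $x^{dn/2}$ via $dn=2(p^i+1)$ and residues of $n$ mod $p$. That final step is elementary but specific, and nothing in your outline produces it. In short, the strategy is sound and parallel to the paper's, but the proof of the "only if" direction is missing rather than merely compressed.
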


\begin{proof}

We first consider the case where $n$ is odd. 

\textbf{Case 1.} $n$ is odd. 

\[ 
\begin{split}
\mathcal{D}_n&= \sum_{i=1}^{\lfloor\frac{(n-1)}{2}\rfloor}\frac{n}{n-i}\dbinom{n-i}{i}(-x^d)^{i}
\end{split}
\]

Then

\begin{equation}
\begin{split}
&\mathcal{D}_n=\cr
&-nx^d+\frac{n}{2}(n-3)x^{2d}-\frac{n}{6}(n-4)(n-5)\,x^{3d}+\cdots +(-1)^{\frac{n-3}{2}}\,\frac{(n-1)n(n+1)}{24}\,x^{\frac{d(n-3)}{2}}\cr
&+(-1)^{\frac{n-1}{2}}\,n\,x^{\frac{d(n-1)}{2}}.
\end{split}
\end{equation}

Note that the coefficient of the second term $\frac{n}{2}(n-3)$ is not always zero. 

\textbf{Subcase 1.1.} Let's assume that $\frac{n}{2}(n-3)$ is not divisible by $p$. If $\mathcal{D}_n$ is DO, $2d=p^\alpha +p^\beta$. Since $\textnormal{gcd}(d,p)=1$, $d=p^i+1$. Then 
$$2(p^i+1)=p^\alpha+p^\beta.$$

Since $p$ is odd and $p\not | \,\,(p^i+1)$, one of $\alpha$ or $\beta$ is zero, say $\beta=0$. Then
$$2(p^i+1)=p^\alpha+1$$

implies 

$$p^\alpha -2p^i=1,$$

which is true if and only if $p=3$, $\alpha =1$ and $i=0$, i.e. $d=2$. 

For the rest of the subcase we assume that $p=3$ and we prove the following. 

\begin{center}
$\mathcal{D}_n$ is DO if and only if $n=5\cdot 3^k$ or $n=7\cdot 3^k$. 
\end{center}

It is easy to see that when $n=5$ and $n=7$ we have

$$\mathcal{D}_5=x^2+2x^4$$

and

$$\mathcal{D}_7=2x^2+2x^4+2x^6,$$

respectively. Clearly they are DO polynomials. 

Now we claim that when $p=3$, $n>7$ odd and $n$ is not a multiple of a power of 3, $\mathcal{D}_n$ is never DO. 

Since $n>7$ is odd and $\textnormal{gcd}(n,3)=1$, we have either $n\equiv 2\pmod{3}$ or $n\equiv 1\pmod{3}$.

Let $n\equiv 2\pmod{3}$ and consider the term before the last term

$$(-1)^{\frac{n-3}{2}}\,\frac{(n-1)n(n+1)}{24}\,x^{\frac{d(n-3}{2}}.$$

Since $(n-1)n(n+1)=6\ell$ for some integer $\ell$, we have 
$$(-1)^{\frac{n-3}{2}}\,\frac{(n-1)n(n+1)}{24}=\frac{\ell}{4}\equiv \ell \pmod{3}.$$

If $\ell \not\equiv 0\pmod{3}$, then we claim that $\frac{d(n-3)}{2}$ is not a sum of powers of 3. Since $d=2$, $\frac{d(n-3)}{2}=n-3$. Assume to the contrary $n-3=3^i+3^j$ which implies $n-2=3^i+3^j+1$. Since $n\equiv 2\pmod{3}$, $n-2=3^i+3^j+1$ if and only if $i=j=0$. $i=j=0$ implies $n-3=2$ which is a contradiction because this is not the first term. 

Now assume that $\ell \equiv 0\pmod{3}$. In this case we show that the fourth term always exists. Note that the fourth term is $x^8$ whose exponent is not a sum of powers of 3. The coefficient of the fourth term is 
$$\frac{n}{n-4}\,\binom{n-4}{4}=\frac{n(n-5)(n-6)(n-7)}{24}.$$

Clearly $3\not | n$ and $3\not | (n-6)$. Now we show that $(n-5)(n-7)$ is a multiple of 24. 

Recall that $n>7$ is odd, $n\equiv 2\pmod{3}$ and $\textnormal{gcd}(n,3)=1$. 

Let $n=2\ell+1$, where $\ell$ is an integer. Then $(n-5)(n-7)=4(\ell-2)(\ell-3)$. Since $n\equiv 2\pmod{3}$, $\ell\equiv 2\pmod{3}$. Let $\ell = 2+3k$ for some integer $k$. Then this implies $(n-5)(n-7)=12k(3k-1)$. Notice that $k$ and $(3k-1)$ have different parity. So $k(3k-1)$ is even which says that $(n-5)(n-7)$ is a multiple of 24. 

Let  

\begin{equation}\label{Nera1}
(n-5)(n-7)=24m,
\end{equation}

where $m$ is an integer. Then 

$$\frac{n(n-5)(n-6)(n-7)}{24}=n(n-6)m.$$

Clearly, $3\not |\, n$ and $3\not |\,(n-6)$. Now we show that $3\not |\, m$. Assume to the ccontrary $3\,|\,m$. 

Then from \eqref{Nera1} we have, $(n-5)(n-7)=72e$ for some integer $e$. 

Since $n\equiv 2\pmod{3}$, write $n=3\ell_1-1$ for some integer $\ell_1$. Recall that in this case the term before the last term is zero. So

\[
\begin{split}
\frac{(n-1)n(n+1)}{24}=\frac{(3\ell_2-2)(3\ell_2-1)(3\ell_2)}{24}\equiv \ell_2 \equiv 0\pmod{3}.
\end{split}
\]

Let $\ell_2=3k_2$ for some integer $k_2$. 

Then $n=3\ell_2-1=9k_2-1\equiv -1\pmod{9}$.

From $(n-5)(n-7)=72e$ and $n\equiv -1\pmod{9}$ we have $2\equiv 0\pmod{9}$, which is a contradiction. 

Now Let $n\equiv 1\pmod{3}$.

We first look at the fourth term. Note that the fourth term is $x^8$ whose exponent is not a sum of two powers of 3. The coefficient of the fourth term is 

$$\frac{n}{n-4}\,\binom{n-4}{4}=\frac{n(n-5)(n-6)(n-7)}{24}.$$

Clearly $(n-5)(n-6)(n-7)=6\ell$ for some integer $\ell$. 

$$\frac{n(n-5)(n-6)(n-7)}{24}=\frac{n\ell}{4}\equiv n\ell \pmod{3}.$$ If $\ell \not\equiv 0\pmod{3}$, then clearly $\mathcal{D}_n$ is not DO. Now we consider the case $\ell \equiv 0\pmod{3}$ and claim that the 7th term is not zero. Note that the 7th term is $x^{14}$ whose exponent is not a sum of two powers of 3. The coefficient of the 7th term is 

\begin{equation}\label{Nera2}
\frac{n}{n-7}\,\binom{n-7}{7}=\frac{n(n-8)(n-9)(n-10)(n-11)(n-12)(n-13)}{7!}.
\end{equation}

We claim that $6\,|\,(n-13)$ and $3\,|\,(n-10)$.

By division algorithm we have $n-13=6q_1+r_1$, where $0\geq r_1\geq 5$. Since $n\equiv 1\pmod{3}$, $r_1\equiv 0\pmod{3}$ which implies $r_1=0\,\,\textnormal{or}\,\,3$. 

If $r_1=3$, then $n-13=3(2q_1+1)$ which is a contradiction since the left hand side is even and the right hand side is odd. So $r_1=0$ which implies 6 divides $(n-13)$. 

By division algorithm we have $n-10=6q_2+r_2$, where $0\geq r_2\geq 2$. Since $n\equiv 1\pmod{3}$, $r_2\equiv 0\pmod{3}$ which implies $r_2=0$. So 3 divides $(n-10)$. 

Now we claim that $q_1$ and $q_2$ are not divisible by 3. 

Assume that $q_1=3k_1$ for some integer $k_1$. Then $n-13=18k_1$ which implies $n=13+18k_1$. Recall that the coefficient of the fourth term given by  
$\frac{n(n-5)(n-6)(n-7)}{24}$ is zero in this case. Consider $(n-5)(n-6)(n-7)$. Straightforward computation yields that

$$(n-5)(n-6)(n-7)=(18k_1+8)(18k_1+7)(18k_1+6)=6\,\times\,A,$$

where $A$ is not divisble by 3 which is a contradiction. 

Now assume that $q_2=3k_2$ for some integer $k_2$. Then $n-10=9k_2$ which implies $n\equiv 1 \pmod{9}$. Let $n=9k+1$ for some integer $k\in \mathbb{Z}$. Then the coefficient of the fourth term is 

\[
\begin{split}
\frac{n(n-5)(n-6)(n-7)}{24}&=\frac{(9k+1)(9k-4)(9k-5)(9k-6)}{24}\cr
&=\frac{(9k+1)(9k-4)(9k-5)(3k-2)}{8}\cr
&\not\equiv 0\pmod{3},
\end{split}
\]

which is a contradiction. 

Now let's go back to the coefficient of the 7th term. From \eqref{Nera2} we have

\[
\begin{split}
\frac{n}{n-7}\,\binom{n-7}{7}&=\frac{n(n-8)(n-9)(n-10)(n-11)(n-12)(n-13)}{7!}\cr
&=\frac{n(n-8)(n-9)(n-11)(n-12)\,q_1q_2}{7\cdot 5\cdot 4\cdot 2\cdot 1}\cr
\end{split}
\]

Clearly $3\not|\,n$, $3\not|\,(n-9)$ and $3\not|\,(n-12)$. 

If $3\,|\,(n-8)$, then $n-8=3e_1$ for some integer $e_1$. This implies $2\equiv 0\pmod{3}$ which is a contradiction. 

If $3\,|\,(n-11)$, then $n-11=3e_2$ for some integer $e_2$. This implies $2\equiv 0\pmod{3}$ which is a contradiction. 

Therefore the coefficient of the 7th term is non zero. 

\textbf{Subcase 1.2.} Now let's consider the case where $p>3$. Recall 

\begin{equation*}
\mathcal{D}_n= \sum_{i=1}^{\lfloor\frac n2\rfloor}\frac{n}{n-i}\dbinom{n-i}{i}(-x^d)^{i}.
\end{equation*}

We prove the following. 

\begin{center}
$\mathcal{D}_n$ is a DO if and only if $n=3\cdot p^k$. 
\end{center}

Since $\textnormal{gcd}(d,p)=1$, $d=p^i+1$. 

When $n=3$, we have $\mathcal{D}_3=-3x^d=(p-2)x^{p^i+1}$, which is clearly DO. 

Now we claim that when $n\neq 3\cdot p^k$, $\mathcal{D}_n$ is not DO. Recall that $\textnormal{gcd}(n,p)=1$. 

Let's consider the second term $\frac{n}{2}(n-3)$. If this is not zero, from subcase 1.1 we have $p=3$, which is a contradiction. So, if the second term is not zero, the polynomial is not DO. 

If the second term is zero, i.e. $\frac{n}{2}(n-3)\equiv 0\pmod{p}$, then $n\equiv 3\pmod{p}$. In this case, we show that the last term is not DO. 

Notice that the last term is $(-1)^{\frac{n}{2}}\cdot 2\cdot x^{\frac{dn}{2}}$. The coefficient is clearly not zero. Assume to the contrary 
$$\frac{dn}{2}=p^i+p^j.$$

Since $\textnormal{gcd}(n,p)=1$ and $\textnormal{gcd}(d,p)=1$, we have 

$$\frac{dn}{2}=p^i+1,$$

which implies

$$dn=2(p^i+1).$$

Since $n\equiv 3\pmod{p}$, we have $3d\equiv 2\pmod{p}$ or $3d\equiv 4\pmod{p}$. Let $d=p^{\ell}+1$. Thus  $3(p^j+1)\equiv 2\pmod{p}$ or $3(p^j+1)\equiv 4\pmod{p}$, which is a contradiction for any nonnegative $j$. 

\textbf{Case 2.} $n$ is even. Then we have

\begin{equation}
\begin{split}
&\mathcal{D}_n=\cr
&-nx^d+\frac{n}{2}(n-3)x^{2d}-\frac{n}{6}(n-4)(n-5)\,x^{3d}+\cdots +(-1)^{\frac{n}{2}-1}\,\frac{n^2}{4}\,x^{d(\frac{n}{2}-1)}+(-1)^{\frac{n}{2}}\cdot 2\cdot x^{\frac{dn}{2}}.
\end{split}
\end{equation}

\textbf{Subcase 2.1.} Let's first consider the case where the coefficient of the second term is not zero, i.e. $n\not\equiv 3\pmod{p}$. Since $\textnormal{gcd}(p,d)=1$, we have $2d=p^i+1$ for some nonnegative integer $i$. Also, the first term is nonzero. So, $d=p^\alpha+1$ for some nonnegative integer $\alpha$. Now $2d=p^i+1$ implies $p^i-2p^{\alpha}=1$. 

\begin{center}
$p^i-2p^{\alpha}=1$ if and only if $p=3, i=1, \alpha =0$. 
\end{center}

$\alpha =0$ implies $d=2$. Therefore second term is nonzero means $p=3$ and $d=2$. Now we show that $\mathcal{D}_n$ is DO if and only if $n=4\cdot 3^k$. Since the second term is zero, $n>2$. 

Let $n=4$. Then $\mathcal{D}_4=2x^2+2x^4$, which is clearly DO.

Let $n>4$ be even and $\textnormal{gcd}(n,3)=1$. Now we claim that $\mathcal{D}_n$ is not DO. 

Consider the fourth term and its coefficient. Because, fourth term is $x^8$, which is clearly not DO. Coefficient of the fourth term is 

$$\frac{n}{n-4}\,\binom{n-4}{4}=\frac{n(n-5)(n-6)(n-7)}{24}.$$

Note that $n\neq 6$. Let $(n-5)(n-6)(n-7)=6\ell_1$ for some integer $\ell_1$. Then 

$$\frac{n}{n-4}\,\binom{n-4}{4}\equiv n\ell_1 \pmod{3}.$$

If $\ell_1 \not\equiv 0\pmod{3}$, then $\mathcal{D}_n$ is not DO. 

Now assume that $\ell_1\equiv 0\pmod{3}$. Then we claim that the last term is not DO. Consider the exponenet of the last term which is $\frac{dn}{2}$. 

Recall that the second term is nonzero, i.e. $n\not\equiv 0\pmod{3}$, which implies $3\not | n$. Since the coefficient of the last term is nonzero, $\textnormal{gcd}(3,d)=1$ and $\textnormal{gcd}(3,n)=1$, for some integer $\ell_2$ we have

$$\frac{dn}{2}=3^{\ell_2}+1.$$

Since $d=2$, the above implies $n=3^{\ell_2}+1$.

Clearly $\ell_2\neq 0$. Because $\ell_2=0$ implies $n=2$. A contradiction. 

If $\ell_2>0$, then $n\equiv 1\pmod{3}$. Now consider the term before the the last term: $(-1)^{\frac{n}{2}-1}\,\frac{n^2}{4}\,x^{d(\frac{n}{2}-1)}$. Clearly, the coefficient is nonzero since $\textnormal{gcd}(n,3)=1$. Consider its exponent $d(\frac{n}{2}-1)=2(\frac{n}{2}-1)=n-2$. 

Since $\mathcal{D}_n$ is DO, $n-2=3^{\alpha}+3^{\beta}$. 

If $\alpha>0, \beta=0$, then $n=3^{\alpha}+3$, which is a contradiction since $\textnormal{gcd}(n,3)=1$.

If $\beta=0$, then $\alpha =0$, i.e. $n=4$. This contradicts the assumption that $n>4$. 

If $\alpha>0, \beta>0$, then $n\equiv 2\pmod{3}$ which contradicts the fact that $n\equiv 1\pmod{3}$.

\textbf{Subcase 2.2.} Now we consider the case where the coefficient of the second term is zero, i.e. $n\equiv 3\pmod{p}$. 

We prove the following. 

\begin{center}
$\mathcal{D}_n$ is a DO if and only if $n=2\cdot p^k$. 
\end{center}

When $n=2$, we have $\mathcal{D}_2=-2x^d=(p-2)x^{p^i+1}$, which is clearly DO for $p\geq 3$. 

Now we claim that when $n\neq 2\cdot p^k$ and $p\geq 3$, $\mathcal{D}_n$ is not DO. Recall that $\textnormal{gcd}(n,p)=1$.

Assume that $n\neq 2\cdot p^k$ and $n\equiv 3\pmod{p}$. Since the last term is nonzero, consider $x^{\frac{dn}{2}}$ (note that when $p=3$, last term is the only term of the polynomial). Since $\textnormal{gcd}(n,p)=1$ and $\textnormal{gcd}(d,p)=1$, let $\frac{dn}{2}=p^i+1$ and $d=p^j+1$ for some nonnegative integers $i$ and $j$. Then we have 

$$(p^j+1)n=2(p^i+1).$$

If $j=0$, then $n=p^i+1$, which implies $n\equiv 2\pmod{p}$ or $n\equiv 1\pmod{p}$ depending on whether $i=0$ or $i>0$, respectively. A contradiction. 

If $j>0$, we have $p^jn+n=2p^i+2$. If $i>0$, then $n\equiv 2\pmod{p}$. A contradiction. If $i=0$, $p^jn+n=4$, which implies $n\equiv 4\pmod{p}$. A contradiction.  

This completes the proof. 

\end{proof}

\begin{rmk}\label{R1}
The DO polynomials obtained in the previous theorem are monomials, binomials or trinomials. We list them below. 
\begin{enumerate}
\item $p=3$, $d=(p^i+1)p^\ell$, $n=2p^m$, $\mathcal{D}_n=x^{p^{\ell+m+i}+p^{\ell +m}}$. 
\item $p=3$, $d=2p^\ell$, $n=4p^m$, $\mathcal{D}_n=2x^{2\cdot p^{\ell +m}}+2x^{4\cdot p^{\ell +m}}$. 
\item $p=3$, $d=2p^\ell$, $n=5p^m$, $\mathcal{D}_n=x^{2\cdot p^{\ell+m}}+2x^{4\cdot p^{\ell +m}}$. 
\item $p=3$, $d=2p^\ell$, $n=7p^m$, $\mathcal{D}_n=2x^{2\cdot p^{\ell+m}}+2x^{4\cdot p^{\ell +m}}+x^{6\cdot p^{\ell+m}}$. 
\item $p>3$, $d=(p^i+1)p^\ell$, $n=2p^m$, $\mathcal{D}_n=(p-2)\,x^{dp^m}$. 
\item $p>3$, $d=(p^i+1)p^\ell$, $n=3p^m$, $\mathcal{D}_n=(p-3)\,x^{dp^m}$. 
\end{enumerate}
\end{rmk}


\section{DO polynomials from reversed Dickson polynomials of the second kind}

In this Section, we consider reversed Dickson polynomials of the second kind. Recall that 
\begin{center}
$E_{n}(a,x)$ is DO if and only if $E_{n}(1,x)$ is DO.
\end{center}

Let $d$ be a positive integer. We denote $E_n(1,x^d)- E_n(1,0)$ by $\mathsf{E}_n$. Then

\begin{equation*}
\mathsf{E}_n= \sum_{i=1}^{\lfloor\frac n2\rfloor}\dbinom{n-i}{i}(-x^d)^{i}.
\end{equation*}

\noindent In this case, $\mathsf{E}_{np}\neq \mathsf{E}_n^p$, but $\mathsf{E}_n(x^{pd})=\mathsf{E}_n(x^d)^p$. So we always assume that $\textnormal{gcd}(d,p)=1$. 

We first consider the case $p=3$. 

\begin{thm} \label{T2.3}
Let $p=3$. The polynomial $\mathsf{E}_n$ is a Dembowski-Ostrom polynomial over $\mathbb F_q$ if and only if one of the following holds.
\begin{enumerate}
\item [(i)] $n=2, 3, 5$, or $6$, $d=(p^{\alpha}+1)p^k$. 
\item [(ii)] $n=4$, $d=\Big(\frac{p^{\alpha}+1}{2}\Big)p^k$. 
\item [(iii)] $n=7$, $d=2p^k$. 
\item [(iv)] $n=10$, $d=2p^k$. 
\item [(v)] $n=13$, $d=2p^k$.
\item [(vi)] $n=15$, $d=4p^k$.
\item [(vii)] $n=19$, $d=2p^k$.
\end{enumerate}
\end{thm}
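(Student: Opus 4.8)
The plan is to mirror the proof of Theorem~2.1, now for the second kind. First I would use the identity $\mathsf{E}_n(x^{pd}) = \mathsf{E}_n(x^d)^p$ to reduce to the case $\gcd(d,p)=1$; the factors $p^k$ attached to the $d$'s in (i)--(vii) just record the undoing of this reduction. In contrast with the first-kind case there is no reduction in $n$, since $\mathsf{E}_{np}\neq\mathsf{E}_n^p$ in general, which is precisely why $n$ ends up pinned to an explicit finite list rather than to a single residue class. I would then write out $\mathsf{E}_n$ with its first few terms displayed,
\[
\mathsf{E}_n = -(n-1)\,x^{d} + \binom{n-2}{2}x^{2d} - \binom{n-3}{3}x^{3d} + \binom{n-4}{4}x^{4d} - \cdots,
\]
and also record the last term (index $\lfloor n/2\rfloor$) and the second-to-last term, distinguishing $n$ even (last coefficient $\pm1$, so never zero modulo $3$) from $n$ odd (last coefficient $\pm\frac{n+1}{2}$).

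The backbone is a case split according to which of the first two terms survives modulo $3$. If the coefficient $n-1$ of $x^{d}$ is nonzero modulo $3$, then $d$ must be a sum of two powers of $3$, and since $\gcd(d,3)=1$ this forces $d = 3^{\alpha}+1$. If instead $n\equiv 1\pmod 3$, so that $x^{d}$ drops out, I would first check that the coefficient $\binom{n-2}{2}$ of $x^{2d}$ is then automatically nonzero modulo $3$, whence $2d$ is a sum of two powers of $3$ and coprimality gives $2d = 3^{\alpha}+1$, i.e.\ $d = \frac{3^{\alpha}+1}{2}$. When both terms survive, the equation $3^{a}-2\cdot 3^{b}=1$ already solved in the proof of Theorem~2.1 collapses this to $d=2$. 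These two shapes of $d$ are exactly the ones appearing in (i)--(vii).

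With $d$ constrained to be $3^{\alpha}+1$ or $\frac{3^{\alpha}+1}{2}$, I would then show that (apart from the single exceptional value $d=1$, which is disposed of separately since already $x^{d}$ and $x^{3d}$ force constraints there) the exponent $4d$ is never a sum of two powers of $3$; hence $\mathsf{E}_n$ being DO forces $\binom{n-4}{4}\equiv 0\pmod 3$, and iterating the same idea with the fifth, sixth and seventh terms, and with the last and second-to-last terms, bounds $n$. For the finitely many surviving values of $n$ --- which turn out to be precisely $2,3,4,5,6,7,10,13,15,19$ --- I would compute $\mathsf{E}_n$ modulo $3$ directly, read off its monomials, and match their exponents against the two admissible shapes of $d$; this yields cases (i)--(vii) together with the explicit monomial, binomial and trinomial forms.

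The main obstacle is the step that rules out all $n$ outside the list. The set of integers of the form $3^{a}+3^{b}$ is sparse, so for $\mathsf{E}_n$ to be DO almost all of the $\lfloor n/2\rfloor$ coefficients $\binom{n-i}{i}$ must vanish modulo $3$; showing this is impossible for large $n$ rests on a careful analysis, via Lucas' theorem, of the base-$3$ digits of $n-i$ and $i$. The delicate point is that for the smallest admissible moduli --- notably $d=2$ and $d=4$ --- several multiples $id$ genuinely are sums of two powers of $3$ (for instance $7\cdot 4 = 3^{3}+1$ and $9\cdot 2 = 3^{2}+3^{2}$), so the bound on $n$ is not uniform in $d$ and these sporadic exponents must be tracked; they are exactly what allow $n=15$ to survive with $d=4$ and $n=19$ with $d=2$. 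Concretely, I expect the endgame to take the form: if $n$ is not in the list, then at least one of the terms of index $4,5,6,7$, or one of the last two terms, has a coefficient that is nonzero modulo $3$ while its exponent is provably not a sum of two powers of $3$.
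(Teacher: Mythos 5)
Your overall strategy is the same as the paper's: reduce to $\gcd(d,3)=1$, use Lucas' theorem to decide which coefficients $\binom{n-i}{i}$ survive modulo $3$, pin down $d$ from the first surviving term, and then exhibit another surviving term whose exponent cannot be written as $3^a+3^b$. Your determination of the two admissible shapes of $d$ is correct: exactly one of the first two terms survives ($x^d$ iff $n\not\equiv 1\pmod 3$, $x^{2d}$ iff $n\equiv 1\pmod 3$), forcing $d=3^\alpha+1$ or $d=\tfrac{3^\alpha+1}{2}$ respectively, and your observation that several multiples $id$ genuinely are sums of two powers of $3$ (e.g.\ $3d=3^{\alpha+1}+3$ when $d=\tfrac{3^\alpha+1}{2}$ would give $6d=3^{\alpha+1}+3$ always representable, and $7\cdot 4=3^3+1$) correctly identifies where the care is needed.

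The gap is in the endgame. Your concrete plan --- that for every $n>19$ outside the list, one of the terms of index $4,5,6,7$ or one of the last two terms has a nonvanishing coefficient together with a non-representable exponent --- is false. Take $n=42$ and $d=4=3^1+1$ (admissible since $42\not\equiv 1\pmod 3$). By Lucas, $\binom{38}{4}\equiv\binom{37}{5}\equiv\binom{36}{6}\equiv 0\pmod 3$, so terms $4,5,6$ are absent; $\binom{35}{7}\equiv 2\pmod 3$ but $7d=28=3^3+1$ is representable; the last term has exponent $21\cdot 4=84=3^4+3$, also representable; and the second-to-last coefficient $\binom{22}{20}=231\equiv 0\pmod 3$ vanishes. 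Every checkpoint on your list passes. A contradiction does exist --- e.g.\ $\binom{26}{16}\equiv 1\pmod 3$ with exponent $64$, which is not a sum of two powers of $3$ --- but index $16$ is not on your list, and nothing in your proposal explains how such an index would be found in general. So the step that eliminates all large $n$ is not merely unfinished bookkeeping: the finite set of checkpoints you propose does not suffice, and a genuinely more systematic analysis (e.g.\ organizing $n$ by residue classes modulo $9$ or $27$ and selecting a killing index per class, which is roughly how the paper proceeds, using indices $1,2,3,4,6$) is required to close the argument.
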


\begin{proof}

We have $\mathsf{E}_2=2x^d$, $\mathsf{E}_3=x^d$, $\mathsf{E}_5=2x^{d}$, and $\mathsf{E}_6=x^d+2x^{3d}$. It is clear that these polynomials are DO if and only if $d=p^{\alpha}+1$.

Now consider the case $n=15$. We have $\mathsf{E}_{15}=x^d+2x^{3d}+x^{7d}$. Assume that $\mathsf{E}_{15}$ is DO. Then $d=3^{\alpha}+1$ and $7d=3^i+1$, which implies $7(3^{\alpha}+1)=3^i+1$ if and only if $\alpha = 1$, and $i=3$, i.e. $d=4$. 

Let $n=4$. $\mathsf{E}_4=x^{2d}$, which is DO if and only if $d=\frac{p^{\alpha}+1}{2}$. 

Let $n=7$. Then $\mathsf{E}_7=x^{2d}+2x^{3d}$, which is DO if and only if $d=\frac{3^{\alpha}+1}{2}$ and $3d=3^i+3^{i+1}$. This implies 
$$3\Big(\frac{3^{\alpha}+1}{2}\Big)=3^i+3^{i+1},$$

which is true if and only if $i=1$ and $\alpha =1$, i.e. $d=2$. 

Consider the cases $n=10$ and $n=19$. Since $\mathsf{E}_{10}=x^{2d}+x^{3d}+2x^{5d}$ and $\mathsf{E}_{19}=x^{2d}+x^{3d}+2x^{5d}+2x^{9d}$, a similar argument to that of $\mathsf{E}_7$ shows that $\mathsf{E}_{10}$ and $\mathsf{E}_{19}$ are DO if and only if $d=2$. 

Now let $n=13$. Then $\mathsf{E}_{13}=x^{2d}+x^{5d}+x^{6d}$. $\mathsf{E}_{13}$ is DO if and only if $2d=3^{\alpha}+1$ and $5d=3^i+1$. This implies
$$5\Big(\frac{3^{\alpha}+1}{2}\Big)=3^i+1,$$

which is true if and only if $i=2$ and $\alpha =1$, i.e. $d=2$. 

This completes the proof of the necessity part. 

Now we show that if $n\not \in \{ 2, 3, 4, 5, 6, 7, 10, 13, 15,19 \}$, then $\mathsf{E}_n$ is not DO. Let $d$ be any positive integer such that $\textnormal{gcd}(d,p)=1$. 

It is straightforward to see that when $n\in \{8, 9, 11, 12, 17, 18\}$, $\binom{n-1}{1}\not\equiv 0\pmod{3}$ and $\binom{n-4}{4}\not\equiv 0\pmod{3}$. This means the coefficients of the first polynomial term, $x^d$, and the fourth polynomial term, $x^{4d}$, are non-zero. Assume to the contrary that $\mathsf{E}_n$ is DO when $n\in \{8, 9, 11, 12, 17, 18\}$. Then $d=3^{\alpha}+1$ and $4d=3^i+1$. This implies $4(3^{\alpha}+1)=3^i+1$, which is not true for any $\alpha$ and $i$. Thus 
$\mathsf{E}_n$ is not DO when $n\in \{8, 9, 11, 12, 17, 18\}$. 

When $n=14$, we have $\mathsf{E}_{14}=2x^d+x^{6d}+2x^{7d}$. Assume to the contrary that $\mathsf{E}_{14}$ is DO. Then $d=3^{\alpha}+1$ and $6d=3^i+1$. This implies $6(3^{\alpha}+1)=3^i+1$, which is not true for any $\alpha$ and $i$. Thus $\mathsf{E}_{14}$ is not DO. 

When $n=16$, we have $\mathsf{E}_{16}=x^{2d}+2x^{3d}+x^{8d}$. Assume to the contrary that $\mathsf{E}_{16}$ is DO. Then $2d=3^{\alpha}+1$ and $8d=3^i+1$. This implies $8(\frac{3^{\alpha}+1}{2})=3^i+1$, which is not true for any $\alpha$ and $i$. Thus $\mathsf{E}_{16}$ is not DO. 

Now let $n>19$. We divide this into two cases. 

\noindent \textbf{Case 1.} $n\equiv 0, 2, 3, 8 \pmod{9}$. 

In this case, clearly $\binom{n-1}{1}\not\equiv 0, 3, 6\pmod{9}$, which implies $\binom{n-1}{1}\not\equiv 0\pmod{3}$. Also, $\binom{n-4}{4}\not\equiv 0, 3, 6\pmod{9}$, which implies $\binom{n-4}{4}\not\equiv 0\pmod{3}$. This means the coefficients of the first polynomial term, $x^d$, and the fourth polynomial term, $x^{4d}$, are non-zero. An argument similar to a previous argument shows that $\mathsf{E}_n$ is not DO. 

\noindent \textbf{Case 2.} $n\equiv 1, 4, 5, 6, 7 \pmod{9}$. 

\noindent \textbf{Sub Case 2.1} $n\equiv  5, 6 \pmod{9}$. 

First consider the case where $n\equiv 5, 6 \pmod{9}$. 

When $n\equiv 5, 6 \pmod{9}$, $\binom{n-1}{1}\equiv 4, 5\pmod{9}$, which implies $\binom{n-1}{1}\not\equiv 0\pmod{3}$.

When $n\equiv 5 \pmod{9}$, $\binom{n-6}{6}\equiv 1\pmod{9}$, which implies $\binom{n-6}{6}\not\equiv 0\pmod{3}$.

Since $\binom{n-1}{1}\not\equiv 0\pmod{3}$ and $\binom{n-6}{6}\not\equiv 0\pmod{3}$, we have $x^d$ and $x^{6d}$. Assume to the contrary that $\mathsf{E}_n$ is DO. Then $d=3^{\alpha}+1$ and $6d=3^i+1$, which implies $6(3^{\alpha}+1)=3^i+1$ if and only if $3^i-6\cdot 3^{\alpha}=5$, which is a contradiction. 

When $n\equiv 6 \pmod{9}$, $\binom{n-3}{3}\equiv 1\pmod{9}$, which implies $\binom{n-3}{3}\not\equiv 0\pmod{3}$.

Since $\binom{n-1}{1}\not\equiv 0\pmod{3}$ and $\binom{n-3}{3}\not\equiv 0\pmod{3}$, we have $x^d$ and $x^{3d}$. Assume to the contrary that $\mathsf{E}_n$ is DO. Then $d=3^{\alpha}+1$ and $3d=3^i+1$, which implies $3(3^{\alpha}+1)=3^i+1$ if and only if $3^i-3^{\alpha+1}=2$, which is a contradiction. 

\noindent \textbf{Sub Case 2.2} $n\equiv 1, 4, 7 \pmod{9}$.

When $n\equiv 1, 4, 7 \pmod{9}$, $\binom{n-2}{2}\not\equiv 1\pmod{9}$, which implies $\binom{n-2}{2}\not\equiv 0\pmod{3}$. 

Let $n\equiv 1\,\textnormal{or}\,7\, \pmod{9}$. Then $\binom{n-3}{3}\equiv 5\,\,\textnormal{or}\,\,4\,\pmod{9}$, which implies $\binom{n-3}{3}\not\equiv 0\pmod{3}$. 

Assume to the contrary that $\mathsf{E}_n$ is DO. Then $2d=3^{\alpha}+1$ and $3d=3^i+1$, which implies $3\Big(\frac{3^{\alpha}+1}{2}\Big)=3^i+1$ if and only if $2\cdot 3^i-3^{\alpha+1}=1$, which is a contradiction. 

Let $n\equiv 4 \pmod{9}$. Then $\binom{n-6}{6}\equiv 7 \pmod{9}$, which implies $\binom{n-6}{6}\not\equiv 0\pmod{3}$. 

Assume to the contrary that $\mathsf{E}_n$ is DO. Then $2d=3^{\alpha}+1$ and $6d=3^i+1$, which implies $6\Big(\frac{3^{\alpha}+1}{2}\Big)=3^i+1$ if and only if $2\cdot 3^i-6\cdot 3^{\alpha}=4$, which is a contradiction. 

\end{proof}

\begin{thm}\label{T2.4}
Let $p=5$. The polynomial $\mathsf{E}_n$ is a Dembowski-Ostrom polynomial over $\mathbb F_q$ if and only if one of the following holds.
\begin{enumerate}
\item [(i)] $n=2, 3$ and $d=p^k(p^{\alpha}+1)$. 
\item [(ii)] $n=7$ and $d=2p^k$. 
\end{enumerate}
\end{thm}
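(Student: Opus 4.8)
\noindent\emph{Proof strategy.} The plan is to follow the template of Theorem~\ref{T2.3}: verify the three claimed families by computing $\mathsf{E}_n$ over $\mathbb F_5$, and then prove necessity by a residue analysis of $n$ modulo small powers of $5$, using Lucas' theorem to decide when a coefficient $\binom{n-i}{i}$ of $\mathsf{E}_n$ vanishes mod $5$. Throughout we use the reduction $\gcd(d,5)=1$, so that an exponent $id$ with $\gcd(i,5)=1$ which is a sum of two powers of $5$ must equal $5^a+1$. For sufficiency, $\mathsf{E}_2=2x^d$ and $\mathsf{E}_3=3x^d$ are monomials, hence DO exactly when $d=5^{\alpha}+1$, while $\mathsf{E}_7\equiv 4x^d+x^{3d}\pmod 5$ is DO iff $d=5^{\alpha}+1$ and $3d=5^i+1$; since $5^i-3\cdot 5^{\alpha}=2$ forces $(\alpha,i)=(0,1)$, this gives $d=2$. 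Reinstating the factor $p^k$ absorbed by $\mathsf{E}_n(x^{pd})=\mathsf{E}_n(x^d)^p$ produces families (i) and (ii).

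For necessity, fix $n\notin\{2,3,7\}$ and split on $n\bmod 5$. If $n\equiv 0$ or $4\pmod 5$, the coefficients of $x^d$ and $x^{2d}$ are nonzero mod $5$, forcing $d=5^{\alpha}+1$ and $2d=5^i+1$, i.e.\ $5^i-2\cdot 5^{\alpha}=1$, which has no solution. If $n\equiv 3\pmod 5$, the coefficients of $x^d$ and $x^{4d}$ are nonzero, forcing $5^i-4\cdot 5^{\alpha}=3$, again impossible. If $n\equiv 1\pmod 5$, the coefficient of $x^d$ may vanish but those of $x^{2d}$ and $x^{3d}$ do not, forcing $2d=5^{\alpha}+1$ and $3d=5^i+1$, hence $3\cdot 5^{\alpha}+1=2\cdot 5^i$, which again has no solution. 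In all of these subcases the relevant binomial coefficients mod $5$ depend only on $n\bmod 5$, and once $n\notin\{2,3,7\}$ the inequalities $\lfloor n/2\rfloor\ge 2,3,4$ needed for the invoked monomials to occur hold automatically, so no separate treatment of small $n$ is required.

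The remaining and genuinely delicate case --- and the main obstacle --- is $n\equiv 2\pmod 5$; here $n\notin\{2,7\}$ forces $n\ge 12$. In this class $\binom{n-2}{2}\equiv 0\pmod 5$, so the quadratic-exponent monomial is absent, but the coefficients of $x^d$ and $x^{3d}$ are nonzero, which by the $\mathsf{E}_7$ computation forces $d=2$. With $d=2$ the exponents of $\mathsf{E}_n$ are $2,4,6,8,10,12,\dots$, and of these $2,6,10$ are sums of two powers of $5$ while $4,8,12,16$ are not; hence $\mathsf{E}_n$ being DO forces $\binom{n-6}{6}\equiv 0\pmod 5$ and, when $\lfloor n/2\rfloor\ge 8$, also $\binom{n-8}{8}\equiv 0\pmod 5$. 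A $5$-adic valuation count, using $v_5(6!)=v_5(8!)=1$, shows that $\binom{n-6}{6}\equiv 0\pmod 5$ is equivalent to $n\equiv 7\pmod{25}$, whereas when $n\equiv 7\pmod{25}$ the eight consecutive integers $n-8,\dots,n-15$ contain a single multiple of $5$, itself not a multiple of $25$, so $\binom{n-8}{8}\not\equiv 0\pmod 5$. Since $n\equiv 7\pmod{25}$ with $n\ne 7$ forces $n\ge 32\ge 16$, the monomial $x^{16}$ is genuinely present in $\mathsf{E}_n$, a contradiction, leaving $n=7$ as the only survivor. The difficulty is concentrated here because $d=2$ is compatible with the first few exponents of $\mathsf{E}_n$, and only by pushing the binomial-coefficient analysis up to modulus $25$ (equivalently, tracking the $5$-adic valuations of $6!$ and $8!$) does one obtain the required contradiction.
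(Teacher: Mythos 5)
Your proposal is correct, and for the residue classes $n\equiv 0,1,3,4\pmod 5$ it coincides with the paper's argument: the same pairs of surviving exponents ($x^d,x^{2d}$, or $x^d,x^{4d}$, or $x^{2d},x^{3d}$) and the same impossible equations in powers of $5$. The genuine divergence is in the class $n\equiv 2\pmod 5$. The paper disposes of it in one step by asserting that $\binom{n-5}{5}\equiv 4\pmod 5$ whenever $n\equiv 2\pmod 5$, so that $x^{d}$ and $x^{5d}$ are both present and $5(5^{\alpha}+1)=5^{i}+1$ gives a contradiction. However, by Lucas' theorem $\binom{n-5}{5}\bmod 5$ equals the second base-$5$ digit of $n-5$ and can vanish --- e.g.\ $n=32$ gives $\binom{27}{5}=80730\equiv 0\pmod 5$ --- so the paper's argument does not actually cover the whole class. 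Your two-stage argument (first forcing $d=2$ from the presence of $x^{d}$ and $x^{3d}$, whose coefficients are $4$ and $-4$ mod $5$ here, and then using the $5$-adic valuations of $\binom{n-6}{6}$ and $\binom{n-8}{8}$ to show that one of the exponents $12$ or $16$ survives) is watertight, at the cost of the extra mod-$25$ case split; it in fact repairs the gap in the published proof. One trivial slip: $\mathsf{E}_2=-x^{d}=4x^{d}$ over $\mathbb F_5$, not $2x^{d}$; this affects nothing since only the nonvanishing of the coefficient is used.
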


\begin{proof}
Let $n=2$. Then $\mathsf{E}_2=4x^d$. If $\mathsf{E}_2$ is DO, then clearly $d=p^{\alpha}+1$.

Let $n=3$. Then $\mathsf{E}_2=3x^d$. If $\mathsf{E}_3$ is DO, then clearly $d=p^{\alpha}+1$.

Let $n=7$. Then $\mathsf{E}_2=4x^d+x^{3d}$. If $\mathsf{E}_3$ is DO, then $d=5^{\alpha}+1$ and $3d=5^i+1$, which implies $5^i-3\cdot 5^\alpha =2$. This is true when $i=1$ and $\alpha =0$, i.e. $d=2$. 

Now we show that when $n\not = 2, 3, 7$, $\mathsf{E}_n$ is not DO.

Assume that $n\not = 2, 3, 7$. We divide this into four cases. 

When $n\equiv 1 \pmod{5}$, $\binom{n-2}{2}\equiv 1\pmod{5}$ and $\binom{n-3}{3}\equiv 1\pmod{5}$.

Since $\binom{n-2}{2}\not\equiv 0\pmod{5}$ and $\binom{n-3}{3}\not\equiv 0\pmod{5}$, we have $x^{2d}$ and $x^{3d}$. Assume to the contrary that $\mathsf{E}_n$ is DO. Then $2d=3^{\alpha}+1$ and $3d=3^i+1$, which implies $3\Big(\frac{3^{\alpha}+1}{2}\Big)=3^i+1$ if and only if $2\cdot 5^i-3\cdot 5^{\alpha}=1$, which is a contradiction. 

When $n\equiv 2 \pmod{5}$, $\binom{n-1}{1}\equiv 1\pmod{5}$ and $\binom{n-5}{5}\equiv 4\pmod{5}$.

Since $\binom{n-1}{1}\not\equiv 0\pmod{5}$ and $\binom{n-5}{5}\not\equiv 0\pmod{5}$, we have $x^{d}$ and $x^{5d}$. Assume to the contrary that $\mathsf{E}_n$ is DO. Then $d=3^{\alpha}+1$ and $5d=3^i+1$, which implies $5(3^{\alpha}+1)=3^i+1$ if and only if $5^i-5^{\alpha+1}=4$, which is a contradiction. 

When $n\equiv 3 \pmod{5}$, $\binom{n-1}{1}\equiv 2\pmod{5}$ and $\binom{n-4}{4}\equiv 1\pmod{5}$.

Since $\binom{n-1}{1}\not\equiv 0\pmod{5}$ and $\binom{n-4}{4}\not\equiv 0\pmod{5}$, we have $x^{d}$ and $x^{4d}$. Assume to the contrary that $\mathsf{E}_n$ is DO. Then $d=3^{\alpha}+1$ and $4d=3^i+1$, which implies $4(3^{\alpha}+1)=3^i+1$ if and only if $5^i-4\cdot 5^{\alpha}=3$, which is a contradiction. 

Now consider the two cases $n\equiv 0 \pmod{5}$ and  $n\equiv 4 \pmod{5}$.

When $n\equiv 0 \pmod{5}$, $\binom{n-1}{1}\equiv 4\pmod{5}$ and $\binom{n-2}{2}\equiv 3\pmod{5}$.

When $n\equiv 4 \pmod{5}$, $\binom{n-1}{1}\equiv 3\pmod{5}$ and $\binom{n-2}{2}\equiv 1\pmod{5}$.

Since $\binom{n-1}{1}\not\equiv 0\pmod{5}$ and $\binom{n-2}{2}\not\equiv 0\pmod{5}$, we have $x^{d}$ and $x^{2d}$. Assume to the contrary that $\mathsf{E}_n$ is DO. Then $d=3^{\alpha}+1$ and $2d=3^i+1$, which implies $2(3^{\alpha}+1)=3^i+1$ if and only if $5^i-2\cdot 5^{\alpha}=1$, which is a contradiction. 

This completes the proof. 

\end{proof}

\begin{thm} \label{T2.5}
Let $p>5$. The polynomial $\mathsf{E}_n$ is a Dembowski-Ostrom polynomial over $\mathbb F_q$ if and only if the following holds.
\begin{enumerate}
\item [(i)] $n=2, 3$ and $d=p^k(p^{\alpha}+1)$. 
\end{enumerate}
\end{thm}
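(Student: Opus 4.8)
The plan is to reproduce, for $p>5$, the strategy already used for $p=3$ in Theorem~\ref{T2.3} and for $p=5$ in Theorem~\ref{T2.4}. Since $\mathsf{E}_n(x^{pd})=\mathsf{E}_n(x^d)^p$, and a polynomial $g$ is DO if and only if $g^{p}$ is (a multiple of $p$ that is a sum of two powers of $p$ equals $p$ times such a sum), it suffices to treat $\gcd(d,p)=1$; the general exponent $d=p^k(p^{\alpha}+1)$ then follows via $x\mapsto x^{p^k}$. Writing $\mathsf{E}_n=\sum_{i\ge 1}\binom{n-i}{i}(-x^{d})^{i}$, which has pairwise distinct exponents $id$, we see that $\mathsf{E}_n$ is DO if and only if $id$ is a sum of two powers of $p$ for every $i$ with $\binom{n-i}{i}\not\equiv 0\pmod p$. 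The positive direction is then immediate: $\mathsf{E}_2=-x^{d}$ and $\mathsf{E}_3=-2x^{d}$ are nonzero monomials (recall $p>2$), and a nonzero monomial $c\,x^{d}$ is DO precisely when $d$ is a sum of two powers of $p$, i.e.\ $d=p^k(p^{\alpha}+1)$.

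For the converse I would prove that $\mathsf{E}_n$ is not DO whenever $p>5$ and $n\ge 4$. The mechanism is as follows. Suppose $\mathsf{E}_n$ is DO and $1\le i<j\le 5$ satisfy $\binom{n-i}{i}\not\equiv 0$ and $\binom{n-j}{j}\not\equiv 0\pmod p$, so both $x^{id}$ and $x^{jd}$ occur. Since $p>5$, $\gcd(ij,p)=1$, hence $id$ and $jd$ are coprime to $p$; a sum of two powers of $p$ that is coprime to $p$ must equal $1+p^{t}$ for some $t\ge 0$, so $id=1+p^{a}$ and $jd=1+p^{b}$ for some $a,b\ge 0$. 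Eliminating $d$ gives $i\,p^{b}-j\,p^{a}=j-i$. Reducing modulo $p$ forces $p\mid (j-i)$ unless $\min(a,b)=0$; since $0<j-i\le 4<p$, exactly one of $a,b$ vanishes. The case $b=0$ is impossible (it forces $i\ge j$), and the case $a=0$ gives $p^{b}=\tfrac{2j}{i}-1$, so $i\mid 2j$ and $\tfrac{2j}{i}-1$ is a positive power of $p$; inspecting the finitely many pairs $1\le i<j\le 5$, this can occur for $p>5$ only for $(i,j)=(1,4)$ with $p=7$ (forcing $d=2$). Hence the pairs $(1,2)$, $(2,3)$, $(1,3)$ and $(4,5)$ each yield a contradiction when $p>5$.

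It remains to exhibit, for every $n\ge 4$, a usable pair of indices; I would split on $n\bmod p$, computing coefficients from $\binom{n-i}{i}=\frac{1}{i!}\prod_{t=1}^{i}(n-2i+t)$ (legitimate modulo $p$ since $p\nmid 5!$). If $n\not\equiv 1,2,3\pmod p$, then $\binom{n-1}{1}=n-1$ and $\binom{n-2}{2}=\tfrac12(n-2)(n-3)$ are both nonzero modulo $p$, so $(1,2)$ applies. If $n\equiv 1\pmod p$ then $\binom{n-2}{2}\equiv 1$ and $\binom{n-3}{3}\equiv -4$, so $(2,3)$ applies; if $n\equiv 2\pmod p$ then $\binom{n-1}{1}\equiv 1$ and $\binom{n-3}{3}\equiv -1$, so $(1,3)$ applies; if $n\equiv 3\pmod p$ then $\binom{n-2}{2}\equiv\binom{n-3}{3}\equiv 0$ but $\binom{n-4}{4}\equiv 1$ and $\binom{n-5}{5}\equiv -6$, so $(4,5)$ applies. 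In each case the required monomials genuinely occur: apart from the excluded $n=2,3$, a value of $n\ge 4$ congruent to $1$, $2$ or $3$ modulo $p$ is at least $p+1\ge 8$, $p+2\ge 9$ or $p+3\ge 10$ respectively, so $\lfloor n/2\rfloor\ge 3$ (and $\ge 5$ in the last case), whereas any other $n\ge 4$ already satisfies $\lfloor n/2\rfloor\ge 2$.

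The step I expect to be the main obstacle is the class $n\equiv 3\pmod p$: there the two coefficients after $\binom{n-1}{1}$ vanish modulo $p$, so the naturally available small pair is $(1,4)$ --- which is exactly the pair that is \emph{not} good when $p=7$ (the would-be solution being $d=2$, $4d=8=1+7$). This is circumvented by noting that $\binom{n-5}{5}$ is also nonzero and using $(4,5)$ instead; equivalently, when $d=2$ the term $x^{5d}=x^{10}$ already destroys the DO property, since $10$ is not a sum of two powers of $7$. Everything else is routine: the finite inspection of pairs $1\le i<j\le 5$ in the second paragraph and the four binomial congruences above are short computations using only that $p>5$, so that none of $2$, $6$, $24$, $120$ is divisible by $p$.
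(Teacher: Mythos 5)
Your proof is correct, and it rests on the same underlying mechanism as the paper's: exhibit two indices $i<j$ with $\binom{n-i}{i},\binom{n-j}{j}\not\equiv 0\pmod p$, so that $id$ and $jd$ must both have the form $1+p^{t}$, and derive a contradiction. The organization, however, is genuinely different and in places cleaner. You package the elimination step as one lemma ($ip^{b}-jp^{a}=j-i$, exactly one of $a,b$ vanishes, leaving $p^{b}=2j/i-1$), which the paper instead re-derives ad hoc in eight separate subcases; your version makes transparent why $(i,j)=(1,4)$ with $p=7$, $d=2$ is the unique bad pair among $1\le i<j\le 5$ and why all others succeed whenever $j-i<p$. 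You also avoid the paper's split on the parity of $n$: in its Subcase 1.1 ($n$ even, $n\equiv 1\pmod p$) the paper pairs $x^{2d}$ with the last term $x^{dn/2}$, which drags $n$ into the diophantine equation and requires an extra patch for $p=7$, whereas your uniform choice of the pair $(2,3)$ (valid for either parity since $\binom{n-3}{3}\equiv -4\not\equiv 0$) sidesteps this entirely. For $n\equiv 3\pmod p$ the paper uses the pair $(1,5)$ where you use $(4,5)$; both work, and your explicit warning that $(1,4)$ must be avoided there (because $4d=8=1+7$ when $p=7$) is exactly the right caution. You also verify the one hypothesis that is easy to forget, namely that the chosen terms actually occur, i.e., $\lfloor n/2\rfloor\ge j$. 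What the paper's route buys is only that each subcase is self-contained arithmetic; what yours buys is a single reusable criterion and the removal of the most delicate subcase.
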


\begin{proof}

Let $n=2$. Then $\mathsf{E}_2=(p-1)\,x^d$. If $\mathsf{E}_2$ is DO, then clearly $d=p^{\alpha}+1$.

Let $n=3$. Then $\mathsf{E}_2=(p-2)\,x^d$. If $\mathsf{E}_3$ is DO, then clearly $d=p^{\alpha}+1$.

Now we show that when $n\not = 2, 3$, $\mathsf{E}_n$ is not DO. 

Assume that $n\not = 2, 3$. We consider two cases depending on the parity of $n$. 

\textbf{Case 1.} $n$ even.

\begin{equation*}
\begin{split}
&\mathsf{E}_n=-(n-1)x^d+\binom{n-2}{2}\,x^{2d}-\binom{n-3}{3}\,x^{3d}+\cdots +(-1)^{\frac{n}{2}}\,x^{\frac{dn}{2}}.
\end{split}
\end{equation*}

\textbf{Sub Case 1.1} $n\equiv 1 \pmod{p}$.

Since $\binom{n-2}{2}\equiv 1\pmod{p}$, we have $x^{2d}$ and $x^{\frac{dn}{2}}$. Assume to the contrary that $\mathsf{E}_n$ is DO. Then $2d=p^{\alpha}+1$ and $\frac{dn}{2}=p^i+1$, which implies $n(p^\alpha+1)=4p^i+4$. This is only true when $p=7$, $i=0$ and $\alpha >0$. Let $p=7$. Then clearly 
$\binom{n-3}{3}\not\equiv 3\pmod{7}$. Since $\mathsf{E}_n$ is DO, we have $3d=7^\beta +1$. Recall that $2d=7^{\alpha}+1$. Thus we have $2\cdot 7^\beta - 3\cdot 7^\alpha =1$, which is a contradiction.

\textbf{Sub Case 1.2} $n\not \equiv 1, 2, 3 \pmod{p}$.

We have $x^{d}$ and $x^{2d}$. Assume to the contrary that $\mathsf{E}_n$ is DO. Then $d=p^{\alpha}+1$ and $2d=p^i+1$, which implies $2(p^\alpha+1)=p^i+1$ if and only if $p^i-2p^\alpha =1$. This is only true when $i=0$ and $\alpha >0$, i.e. $2d=2$, which implies $d=1$. A contradiction since $d=p^{\alpha}+1$ and $\alpha >0$. 

\textbf{Sub Case 1.3}  $n\not \equiv 1 \pmod{p}$ and $n\equiv 2 \pmod{p}$.

Since $\binom{n-3}{3}\equiv -1\pmod{p}$, we have $x^{d}$ and $x^{3d}$. Assume to the contrary that $\mathsf{E}_n$ is DO. Then $d=p^{\alpha}+1$ and $3d=p^i+1$, which implies $3(p^\alpha+1)=p^i+1$ if and only if $p^i-3p^\alpha =2$. This is a contradiction for any $p>5$, $\alpha$ and $i$. 

\textbf{Sub Case 1.4}  $n\not \equiv 1 \pmod{p}$ and $n\equiv 3 \pmod{p}$.

Since $\binom{n-5}{5}\equiv -6\pmod{p}$, we have $x^{d}$ and $x^{5d}$. Assume to the contrary that $\mathsf{E}_n$ is DO. Then $d=p^{\alpha}+1$ and $5d=p^i+1$, which implies $5(p^\alpha+1)=p^i+1$ if and only if $p^i-5p^\alpha =4$. This is a contradiction for any $p>5$, $\alpha$ and $i$. 

\textbf{Case 2.} $n$ odd.

\begin{equation*}
\begin{split}
&\mathsf{E}_n=-(n-1)x^d+\binom{n-2}{2}\,x^{2d}-\binom{n-3}{3}\,x^{3d}+\cdots +(-1)^{\frac{n-1}{2}}\,\,\binom{\frac{n+1}{2}}{\frac{n-1}{2}}\,\,x^{\frac{d(n-1)}{2}}.
\end{split}
\end{equation*}

\textbf{Sub Case 2.1} $n\equiv 1 \pmod{p}$.

Since $\binom{n-2}{2}\equiv 1\pmod{p}$ and $\binom{n-3}{3}\equiv -4\pmod{p}$, we have $x^{2d}$ and $x^{3d}$. Assume to the contrary that $\mathsf{E}_n$ is DO. Then $2d=p^{\alpha}+1$ and $3d=p^i+1$, which implies $2\cdot p^i-3\cdot p^\alpha =1$. This is a contradiction for any $p>5$, $\alpha$ and $i$. 

\textbf{Sub Case 2.2}  $n\not \equiv 1, 2, 3 \pmod{p}$.

We have $x^{d}$ and $x^{2d}$. Assume to the contrary that $\mathsf{E}_n$ is DO. Then $d=p^{\alpha}+1$ and $2d=p^i+1$, which implies $2(p^\alpha+1)=p^i+1$ if and only if $p^i-2p^\alpha =1$. This is only true when $i=0$ and $\alpha >0$, i.e. $2d=2$, which implies $d=1$. A contradiction since $d=p^{\alpha}+1$ and $\alpha >0$. 

\textbf{Sub Case 2.3}  $n\not \equiv 1 \pmod{p}$ and $n\equiv 2 \pmod{p}$.

Since $\binom{n-3}{3}\equiv -1\pmod{p}$, we have $x^{d}$ and $x^{3d}$. Assume to the contrary that $\mathsf{E}_n$ is DO. Then $d=p^{\alpha}+1$ and $3d=p^i+1$, which implies $3(p^\alpha+1)=p^i+1$ if and only if $p^i-3p^\alpha =2$. This is a contradiction for any $p>5$, $\alpha$ and $i$. 

\textbf{Sub Case 2.4}  $n\not \equiv 1 \pmod{p}$ and $n\equiv 3 \pmod{p}$.

Since $\binom{n-5}{5}\equiv -6\pmod{p}$, we have $x^{d}$ and $x^{5d}$. Assume to the contrary that $\mathsf{E}_n$ is DO. Then $d=p^{\alpha}+1$ and $5d=p^i+1$, which implies $5(p^\alpha+1)=p^i+1$ if and only if $p^i-5p^\alpha =4$. This is a contradiction for any $p>5$, $\alpha$ and $i$. 

This completes the proof. 

\end{proof}

\begin{rmk}\label{R2}
The DO polynomials of the form $\mathsf{E}_n$, obtained in Theorem~\ref{T2.3}, Theorem~\ref{T2.4} and Theorem~\ref{T2.5}, are monomials, binomials, trinomials, and quadrinomials. We list them below. 
\begin{enumerate}
\item $p=3$, $d=p^k(p^\ell+1)$, $n=2$, $\textsc{E}_n=2x^{p^k(p^\ell+1)}$. 
\item $p=3$, $d=p^k(p^\ell+1)$, $n=3$, $\mathsf{E}_n=x^{p^k(p^\ell+1)}$. 
\item $p=3$, $d=p^k\Big(\frac{p^\ell+1}{2}\Big)$, $n=4$, $\mathsf{E}_n=x^{p^k(p^\ell+1)}$. 
\item $p=3$, $d=p^k(p^\ell+1)$, $n=5$, $\mathsf{E}_n=2x^{p^k(p^\ell+1)}$. 
\item $p=3$, $d=p^k(p^\ell+1)$, $n=6$, $\mathsf{E}_n=x^{p^k(p^\ell+1)}+2x^{p^{k+1}(p^\ell+1)}$. 
\item  $p=3$, $d=2p^k$, $n=7$, $\mathsf{E}_n=x^{4p^k}+2x^{2p^{k+1}}$. 
\item $p=3$, $d=2p^k$, $n=10$, $\mathsf{E}_n=x^{4p^k}+x^{2p^{k+1}}+2x^{10p^k}$. 
\item $p=3$, $d=2p^k$, $n=13$, $\mathsf{E}_n=x^{4p^k}+x^{10p^{k}}+x^{12p^k}$. 
\item $p=3$, $d=4p^k$, $n=15$, $\mathsf{E}_n=x^{4p^k}+x^{4p^{k+1}}+x^{28p^k}$. 
\item $p=3$, $d=2p^k$, $n=19$, $\mathsf{E}_n=x^{4p^k}+x^{2p^{k+1}}+2x^{10p^k}+2x^{2p^{k+2}}$.  
\item $p=5$, $d=p^k(p^\ell+1)$, $n=2$, $\mathsf{E}_n=4x^{p^k(p^\ell+1)}$. 
\item $p=5$, $d=p^k(p^\ell+1)$, $n=3$, $\mathsf{E}_n=3x^{p^k(p^\ell+1)}$. 
\item $p=5$, $d=2p^k$, $n=7$, $\mathsf{E}_n=4x^{2p^k}+x^{6p^k}$. 
\item $p>5$, $d=p^k(p^\ell+1)$, $n=2$, $\mathsf{E}_n=(p-1)\,x^{p^k(p^\ell+1)}$. 
\item $p>5$, $d=p^k(p^\ell+1)$, $n=3$, $\mathsf{E}_n=(p-2)\,x^{p^k(p^\ell+1)}$.
\end{enumerate}
\end{rmk}



\begin{thebibliography}{99}

\bibitem{BCHO-2001}
A. Blokhuis, R.S. Coulter, M. Henderson, C. M. O'Keefe, {\it Permutations amongst the Dembowski-Ostrom polynomials}. Finite fields and applications (Augsburg, 1999), 37 -- 42, Springer, Berlin, 2001. 

\bibitem{Coulter-Matthews-2010}
R. S. Coulter, R. W. Matthews, {\it Dembowski-Ostrom polynomials from Dickson polynomials}, Finite Fields Appl. {\bf 16} (2010), no. 5, 369 -- 379.

\bibitem{DO-1968}
P. Dembowski, T. G. Ostrom, {\it Planes of order n with collineation groups of order $n^2$}. Math. Z. {\bf 103} 1968 239 -- 258. 

\bibitem{Mills-2002}
D. Mills, {\it On the evaluation of Weil sums of Dembowski-Ostrom polynomials}. J. Number Theory {\bf 92} (2002), no. 1, 87 -- 98. 

\bibitem{Hou-Mullen-Sellers-Yucas-FFA-2009}
X. Hou, G. L. Mullen, J. A. Sellers, J. L. Yucas, {\it Reversed Dickson polynomials over finite fields}, Finite Fields Appl. {\bf 15} (2009), 748 -- 773.

\bibitem{Patarin-1996}
J. Patarin, {\it Hidden fields equations (HFE) and isomorphisms of polynomials (IP): two new familiees of asymmetric algorithms}, Advances in Cryptology - Eurocrypt `96 (U. Maurer, ed.), Lecture Notes in Computer Science, vol. 1070, 1996, pp. 33 -- 48. 

\bibitem{Wang-Yucas-FFA-2012}
Q. Wang and J. L. Yucas, {\it Dickson polynomials over finite fields}, Finite Fields Appl. {\bf 18} (2012), 814--831.

\bibitem{Zhang-Wu-Liu-2016}
X. Zhang, B. Wu, Z. Liu, {\it Dembowski-Ostrom polynomials from reversed Dickson polynomials}. J. Syst. Sci. Complex. {\bf 29} (2016), no. 1, 259 -- 271.

\end{thebibliography}
\end{document}